\numberwithin{equation}{section}
\newtheorem{Theorem}{Theorem}[section]
\newtheorem{Proposition}[Theorem]{Proposition}
\newtheorem{defn}{Definition}[section]
\newcommand{\Rn}{{\mathbb R}^{N}}
\newcommand{\ba} {\beta}
\newcommand{\ga} {\gamma}
\newcommand{\Ga} {\Gamma}
\newcommand{\De} {\Delta}
\newcommand{\na} {\nabla}
\newcommand{\no}{\nonumber}
\newcommand{\R}{\mathbb{R}}
\newcommand{\deb}{\rightharpoonup}
\newcommand{\starstar}{2^{*\!*} }
\newcommand{\smu}{S_{\mu,0}}
\newcommand{\smub}{S_{\mu,\ba}}
\begin{document}

\title{\bf \Large A note on semilinear elliptic equation with biharmonic operator and multiple critical nonlinearities}

\author{\bf \Large Mousomi Bhakta
\footnote
{Department of Science and Technology, University of New England, Armidale, NSW-2350, Australia. 
Email: {mousomi.bhakta@gmail.com}}}

\date{}

\maketitle

\noindent
\begin{abstract}
\small{ We study the existence and non-existence of nontrivial weak solution of  
$$
{\Delta^2u-\mu\frac{u}{|x|^{4}} = \frac{|u|^{q_{\ba}-2}u}{|x|^{\beta}}+|u|^{q-2}u\quad\textrm{in $\Rn$,}}
$$
where $N\geq 5$, $q_{\ba}=\frac{2(N-\ba)}{N-4}$, $0<\ba<4$, $1<q\leq\starstar$ and 
$\mu<\mu_1:=\big(\frac{N(N-4)}{4}\big)^2$. Using   Pohozaev type of identity, we prove the non-existence result when $1<q<\starstar$. On the other hand when the equation has multiple critical nonlinearities i.e. $q=\starstar$ and $-(N-2)^2\leq\mu<\mu_1$, we establish the existence of nontrivial solution using the Mountain-Pass theorem by Ambrosetti and Rabinowitz.}
\end{abstract}

\noindent

{\it \footnotesize 2001 Mathematics Subject Classification}.  {\scriptsize 31B30, 35J35, 35J91}.\\
{\it \footnotesize Key words}. {\scriptsize Multiple critical nonlinearities, Biharmonic operator, nonexistence, existence, Pohozaev identity, fourth order equation, Rellich potential.}

\section{Introduction}

In this article  we  study  the singular fourth order elliptic problem:
\begin{equation}
\label{eq:problem}
\begin{cases}
\Delta^2u-\mu\frac{u}{|x|^{4}}=\frac{|u|^{q_{\ba}-2}u}{|x|^{\beta}}+|u|^{q-2}u\quad\text{in}\quad \Rn,\\
u\in D^{2,2}(\Rn).
\end{cases}
\end{equation}
where $\Delta^2u:=\Delta(\Delta u)$ is the biharmonic operator and
\begin{equation}
\label{eq:ass_q_mu}
\begin{cases}
N\geq 5,\quad q_{\ba}=\frac{2(N-\ba)}{N-4}, \quad 0<\ba<4, \quad 1<q\leq\starstar:=\frac{2N}{N-4},\\ \mu<\mu_1=\displaystyle\left[\frac{N(N-4)}{4}\right]^2. 
\end{cases}
\end{equation}

$D^{2,2}(\Rn)$ is the closure of $C^{\infty}_0(\Rn)$ with respect to the norm $\displaystyle\left(\int_{\Rn}|\Delta u|^2 dx\right)^\frac{1}{2}$.

By a weak solution of this above equation we mean there exists $u\in D^{2,2}(\Rn)$, $u\not\equiv 0$ and 

$$\int_{\Rn}\displaystyle\left[\Delta u\Delta\phi-\mu\frac{u\phi}{|x|^{4}}\right]dx=\int_{\Rn}\frac{|u|^{q_{\ba}-2}u\phi}{|x|^{\ba}}dx+\int_{\Rn}|u|^{q-2}u\phi\ dx \quad \forall\ \phi\in C^{\infty}_0(\Rn).$$

As $1<q\leq\starstar$ and $u\in D^{2,2}(\Rn)$, we get that $u\in L^q_{loc}(\Rn)$ and therefore the definition of weak solution makes sense.

It is well known that $\mu_1$ is the
best constant in the  Rellich inequality (See \cite{Rel54}, \cite{Rel69})
\begin{equation}\label{Rellich}
\mu_1\int_{\Rn}|x|^{-4}|u|^2 dx\displaystyle\leq \int_{\Rn}|\Delta u|^2 dx  \quad\textrm{for any}\   \  u\in \mathcal D^{2,2}(\R^N).
\end{equation}
We also recall here the Sobolev inequality:
\begin{equation}\label{Sob}
S^{**}\displaystyle\left(\int_{\Rn}|u|^{\starstar} dx\right)^\frac{2}{\starstar}\displaystyle\leq \int_{\Rn}|\Delta u|^2 dx  \quad\textrm{for any}\   \  u\in \mathcal D^{2,2}(\R^N)
\end{equation}
where $S^{**}>0$ is the Sobolev constant. When $0<\ba<4$, interpolating the  Sobolev equality and the Rellich inequality via H\'{o}lder inequality we obtain,
\begin{equation}\label{CKN}
 C\left(\int_{\Rn}\frac{|u|^{q_{\ba}}}
{|x|^{\beta}}dx\right)^{\frac{2}{q_{\ba}}}\leq \int_{\Rn}|\De u|^{2}dx \quad \textrm{for any $u\in \mathcal D^{2,2}(\Rn)$}
\end{equation}
where $C=C(N,\ba)$ is a positive constant (see \cite{CM}, \cite{Bh2}, \cite{BM}). Note that \eqref{CKN} is the second order version of the celebrated Caffarelli-Kohn-
Nirenberg inequalities \cite{CKN}. As $\mu<\mu_1$, $\displaystyle\left[\int_{\Rn}\left(|\Delta u|^2-\mu\frac{|u|^2}{|x|^4}\right)dx\right]^\frac{1}{2}$ is an equivalent norm in $D^{2,2}(\Rn)$,   since the following inequality holds:
\begin{equation*}
\displaystyle\left(1-\frac{\mu^{+}}{\mu_1}\right)\int_{\Rn}|\Delta u|^2 dx\leq \int_{\Rn}\left(|\Delta u|^2-\mu\frac{|u|^2}{|x|^4}\right)dx\leq\left(1+\frac{\mu^{-}}{\mu_1}\right)\int_{\Rn}|\Delta u|^2 dx
\end{equation*}
where $\mu^{+}=\text{max}(\mu, 0)$ and $\mu^{-}=-\text{min}(\mu, 0)$. We denote this equivalent norm by $||u||$. 

\vspace{3mm}

Existence, nonexistence as well as qualitative properties of nontrivial solutions of elliptic equations with biharmonic operator and with/without singular potentials were recently studied by several authors, but essentially with one critical exponent. We refer \cite{AMM1, AMM2, Bh2, BM, CM, GGS, Lin} and the references there-in. For the  second order elliptic equations, more precisely, equations with the Lapalce/p-Laplace operator, existence of solutions were studied when multiple critical nonlinearities were involved (see \cite{GP, FPR, KL}). In this article we extend the results of \cite{FPR} to the fourth order semilinear equation. The main purpose of this paper is to discuss the existence of solutions to the singular problem
\eqref{eq:problem} when $q=\starstar$ by using variational methods. When $q=\starstar$ in \eqref{eq:problem}, we define the corresponding  energy functional $I$ on $D^{2,2}(\Rn)$ associated with \eqref{eq:problem} as follows:
\begin{equation}\label{I-u}
I(u)=\frac{1}{2}||u||^2-\frac{1}{q_{\ba}}\int_{\Rn}\frac{|u|^{q_{\ba}}}{|x|^{\ba}}dx-\frac{1}{\starstar}\int_{\Rn}|u|^{\starstar} dx\quad \text{for}\ u\in D^{2,2}(\Rn).
\end{equation}
By using Rellich inequality, Sobolev inequality and \eqref{CKN}, it is easy to see that $I$ is a well-defined $C^1$ functional on $D^{2,2}(\Rn)$. The critical points of
$I$ correspond to weak solutions of \eqref{eq:problem} when $q=\starstar$.  The standard method to find the critical points of the functional is via Mountain-Pass Theorem of Ambrosetti and Rabinowitz but note that when $q=\starstar$, equation \eqref{eq:problem} is invariant under the weighted dilation 
\begin{equation}\label{dial}
u(x)\mapsto t^\frac{N-4}{2}u(tx) \quad t>0.
\end{equation}
Therefore it is well known that the mountain pass theorem does not yield critical points, but only the Palais-Smale sequences (see definition \eqref{d:PS}). In this type of situation it is always very important to understand the convergence of the Palais-Smale sequences. As it was already mentioned in \cite{FPR}, we observe here the main difficulty is that there is an asymptotic competition between the energy carried by two critical nonlinearities. If one dominates the other, then there is vanishing of the weakest one and we obtain solution of an equation with only one critical nonlinearity. Therefore the crucial step here is to avoid the dominance of one term on the other.  To overcome this difficulty, in Section 3 we choose the Palais-Smale sequence at "suitable" energy level and doing a careful analysis of concentration, we show that there is a balance between the energies of the two nonlinearities mentioned above, and therefore none can dominate the other. Therefore we could make the full use of conformal invariance of \eqref{eq:problem} under the dialtion \eqref{dial} and we recover the solution of \eqref{eq:problem} when $q=\starstar$. In Section 2, using Pohozaev type of identity, we prove that \eqref{eq:problem} does not have any solution when $q<\starstar$.

\section{Non-existence result when $q<\starstar$}
\begin{Theorem}\label{t:nonex}
Let $\ba$, $q_{\ba}$ and $\mu$ be defined as in \eqref{eq:ass_q_mu}. If $u\in D^{2,2}(\Rn)$ is a weak solution of \eqref{eq:problem} where $1<q<\starstar$, then $u\equiv 0$.
\end{Theorem}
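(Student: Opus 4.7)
The plan is to derive a Pohozaev-type identity by multiplying \eqref{eq:problem} by the dilation generator $x\cdot\nabla u$, integrating by parts, and comparing the outcome with the weak formulation tested against $u$ itself. The individual contributions are computed in the usual way: using $\Delta(x\cdot\nabla u)=2\Delta u+x\cdot\nabla(\Delta u)$ and one further integration by parts yields $\int_{\Rn}\Delta^2 u\,(x\cdot\nabla u)\,dx=-\tfrac{N-4}{2}\int_{\Rn}|\Delta u|^2\,dx$; symmetrizing $u(x\cdot\nabla u)=\tfrac12\,x\cdot\nabla u^2$ (and analogously for $|u|^{\qb}$ and $|u|^q$) together with $\nabla\cdot(x|x|^{-\gamma})=(N-\gamma)|x|^{-\gamma}$ gives $\int_{\Rn}\tfrac{u(x\cdot\nabla u)}{|x|^4}\,dx=-\tfrac{N-4}{2}\int_{\Rn}\tfrac{u^2}{|x|^4}\,dx$, $\int_{\Rn}\tfrac{|u|^{\qb-2}u(x\cdot\nabla u)}{|x|^{\ba}}\,dx=-\tfrac{N-\ba}{\qb}\int_{\Rn}\tfrac{|u|^{\qb}}{|x|^{\ba}}\,dx$, and $\int_{\Rn}|u|^{q-2}u\,(x\cdot\nabla u)\,dx=-\tfrac{N}{q}\int_{\Rn}|u|^q\,dx$.

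The defining relation $\qb=2(N-\ba)/(N-4)$ makes $(N-\ba)/\qb=(N-4)/2$, so after inserting these identities the resulting Pohozaev identity reads
$$\tfrac{N-4}{2}\,||u||^2=\tfrac{N-4}{2}\int_{\Rn}\tfrac{|u|^{\qb}}{|x|^{\ba}}\,dx+\tfrac{N}{q}\int_{\Rn}|u|^q\,dx.$$
Subtracting $\tfrac{N-4}{2}$ times the weak formulation $||u||^2=\int_{\Rn}|u|^{\qb}/|x|^{\ba}\,dx+\int_{\Rn}|u|^q\,dx$ leaves $\bigl(\tfrac{N}{q}-\tfrac{N-4}{2}\bigr)\int_{\Rn}|u|^q\,dx=0$. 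Since $q<\starstar=2N/(N-4)$, the coefficient $N/q-(N-4)/2$ is strictly positive, forcing $\int_{\Rn}|u|^q\,dx=0$ and hence $u\equiv 0$.

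The main obstacle is to legitimize these integrations by parts when $u$ is only known to lie in $D^{2,2}(\Rn)$. The plan is a regularity bootstrap followed by a cutoff argument: standard elliptic estimates applied to $\Delta^2 u=\mu u/|x|^4+|u|^{\qb-2}u/|x|^{\ba}+|u|^{q-2}u$ yield enough smoothness on compact subsets of $\Rn\setminus\{0\}$ to treat $x\cdot\nabla u$ as an admissible test function locally; one then tests the equation against $\eta_{\varepsilon,R}(x)(x\cdot\nabla u)$ where $\eta_{\varepsilon,R}$ is a smooth cutoff supported in the annulus $\{\varepsilon\le|x|\le R\}$ and passes to the limit $\varepsilon\to 0$, $R\to\infty$. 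The delicate part is showing that the error terms produced when derivatives fall on $\eta_{\varepsilon,R}$ vanish in the limit; this is where the Rellich inequality \eqref{Rellich}, the Sobolev inequality \eqref{Sob}, and \eqref{CKN} are used to absorb the singular contributions near the origin and at infinity. Once the identity is rigorously established the algebraic conclusion above is immediate.
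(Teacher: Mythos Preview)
Your overall strategy---derive the Pohozaev identity via a cutoff multiplier $\eta_{\varepsilon,R}(x)(x\cdot\nabla u)$, compare it with the equation tested against $u$, and read off $\bigl(\tfrac{N}{q}-\tfrac{N-4}{2}\bigr)\int_{\Rn}|u|^q\,dx=0$---is exactly what the paper does. The formal computations you record are correct, and the regularity bootstrap away from the origin is also what the paper invokes.

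There is, however, one genuine gap. Both identities you rely on require $\int_{\Rn}|u|^q\,dx<\infty$, and this is \emph{not} a consequence of $u\in D^{2,2}(\Rn)$ when $q<\starstar$: Sobolev gives only $u\in L^{\starstar}(\Rn)$, hence $L^q_{\mathrm{loc}}$, but global $L^q$ integrability can fail at infinity. In your cutoff Pohozaev argument the error term coming from the $|u|^q$ nonlinearity is $\tfrac{1}{q}\int_{\Rn}|u|^q\,(x\cdot\nabla\eta_{\varepsilon,R})\,dx$, which on the outer annulus $\{R\le|x|\le 2R\}$ is essentially $\int_{R\le|x|\le 2R}|u|^q\,dx$; none of \eqref{Rellich}, \eqref{Sob}, \eqref{CKN} controls this quantity uniformly in $R$. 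Likewise, testing the weak formulation against $u$ itself is only legitimate once $u\in L^q(\Rn)$ is known.

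The paper isolates precisely this point: it first proves the result under the extra hypothesis $u\in L^q(\Rn)$ (your argument), and then establishes $u\in L^q(\Rn)$ by a separate step---testing the equation against $\phi_{\varepsilon,R}u$ (rather than $\phi_{\varepsilon,R}(x\cdot\nabla u)$). That test yields
\[
\int_{\Rn}\phi_{\varepsilon,R}|u|^q\,dx \le C\!\left(\int_{\Rn}|\Delta u|^2 + \int_{\Rn}\frac{|u|^2}{|x|^4} + \int_{\Rn}\frac{|u|^{\qb}}{|x|^{\ba}}\right) + I_1 + I_2,
\]
where the cross terms $I_1=\int|u||\Delta u||\Delta\phi_{\varepsilon,R}|$ and $I_2=\int|\Delta u||\nabla u||\nabla\phi_{\varepsilon,R}|$ are bounded uniformly by combining the scaling $|\nabla^k\phi_{\varepsilon,R}|\lesssim |x|^{-k}$ on its support with Rellich ($\int|u|^2/|x|^4\le C\int|\Delta u|^2$) and Hardy applied to $\nabla u$ ($\int|\nabla u|^2/|x|^2\le C\int|\Delta u|^2$). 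Letting $\varepsilon\to 0$, $R\to\infty$ then gives $u\in L^q(\Rn)$. You should add this step; once it is in place, your argument goes through verbatim.
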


We first prove this theorem under an additional assumption.

\begin{Proposition}\label{p:nonex}
In addition to the assumptions on Theorem \ref{t:nonex}, we assume that $u\in L^q(\Rn)$. Then if $u$ is a weak solution of \eqref{eq:problem} with $1<q< \starstar$,  $u\equiv 0$.
\end{Proposition}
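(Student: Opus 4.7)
The plan is to derive a Pohozaev-type identity by formally testing \eqref{eq:problem} against $x\cdot\nabla u$, and to compare it with the ``energy'' identity obtained by testing against $u$. Each of the four integrals $\int|\Delta u|^{2}$, $\int u^{2}/|x|^{4}$, $\int|u|^{\qb}/|x|^{\ba}$ and $\int|u|^{q}$ will appear in both identities, but with different scaling coefficients; subtracting will cause every critical term to cancel and will leave precisely a multiple of $\int_{\Rn}|u|^{q}\,dx$ whose coefficient is nonzero exactly when $q\neq\starstar$.

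More concretely, two integrations by parts together with the identity $\Delta(x\cdot\nabla u)=2\Delta u+x\cdot\nabla\Delta u$ give
$$\int_{\Rn}\Delta^{2}u\,(x\cdot\nabla u)\,dx=\frac{4-N}{2}\int_{\Rn}|\Delta u|^{2}\,dx,$$
while for $\phi\ge 0$ and $s\in[0,N)$ one has $\int\frac{x\cdot\nabla\phi}{|x|^{s}}\,dx=-(N-s)\int\frac{\phi}{|x|^{s}}\,dx$. Applying this with $\phi=u^{2},|u|^{\qb},|u|^{q}$ and $s=4,\ba,0$, and using the identity $\qb(N-4)=2(N-\ba)$, the Pohozaev identity simplifies to
$$||u||^{2}=\int_{\Rn}\frac{|u|^{\qb}}{|x|^{\ba}}\,dx+\frac{2N}{q(N-4)}\int_{\Rn}|u|^{q}\,dx.$$
Subtracting the straightforward energy identity $||u||^{2}=\int_{\Rn}|u|^{\qb}/|x|^{\ba}\,dx+\int_{\Rn}|u|^{q}\,dx$ (obtained by testing \eqref{eq:problem} against $u$) leaves
$$\frac{2N-q(N-4)}{q(N-4)}\int_{\Rn}|u|^{q}\,dx=0,$$
and the prefactor is strictly positive since $q<\starstar=\frac{2N}{N-4}$. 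Hence $\int_{\Rn}|u|^{q}\,dx=0$ and $u\equiv 0$.

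The main obstacle is rigorously justifying the formal calculation above, since $x\cdot\nabla u$ is not an admissible test function in the weak formulation. The extra hypothesis $u\in L^{q}(\Rn)$, combined with $u\in D^{2,2}(\Rn)\hookrightarrow L^{\starstar}(\Rn)$, places each right-hand side of \eqref{eq:problem} in a Lebesgue space to which Calder\'on--Zygmund theory applies, so a standard bootstrap yields $u\in C^{4,\al}_{\mathrm{loc}}(\Rn\setminus\{0\})$ together with the integrability of $|\Delta u|^{2}$, $u^{2}/|x|^{4}$, $|u|^{\qb}/|x|^{\ba}$ and $|u|^{q}$ needed to make sense of each term. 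The identity is then obtained by multiplying the equation by $\eta_{R}(x\cdot\nabla u)$ for a radial cutoff $\eta_{R}$ supported in an annulus $\var<|x|<R$, integrating by parts on the support of $\eta_{R}$, and passing to the limit as $R\to\infty$ and $\var\to 0$; the resulting boundary contributions are controlled at infinity by the integrability just gained and near the origin by the Rellich inequality \eqref{Rellich} and the estimate \eqref{CKN}.
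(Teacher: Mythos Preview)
Your argument is correct and follows essentially the same route as the paper: both derive the Pohozaev identity by multiplying \eqref{eq:problem} against $(x\cdot\nabla u)$ times a radial cutoff supported in an annulus $\{\epsilon<|x|<R\}$, pass to the limit using the finiteness of $\int|\Delta u|^{2}$, $\int u^{2}/|x|^{4}$, $\int|u|^{\qb}/|x|^{\ba}$ and the assumed $\int|u|^{q}$, and then subtract the energy identity obtained from testing with $u$ to force $\int_{\Rn}|u|^{q}\,dx=0$. The paper obtains regularity away from the origin by quoting \cite{GGS} and the limiting computation by quoting \cite{Bh2}, while you sketch the bootstrap and the passage to the limit directly, but the substance is the same.
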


\begin{proof} We prove this proposition by establishing Pohozaev type of identity. A similar result was proved in \cite{Bh2} on bounded domain in $\Rn$. We use the same cut-off function which was used in \cite{Bh2}. More precisely,
for $\epsilon>0$ and $R>0$, we define $\phi_{\epsilon,R}(x)=\phi_{\epsilon}(x)\psi_{R}(x)$ where 
$\phi_{\epsilon}(x)=\phi(\frac{|x|}{\epsilon})$ and $\psi_R(x)=\psi(\frac{|x|}{R})$, $\phi$ and $\psi$ are smooth functions in $\R$ with the properties $0\leq\phi,\psi\leq 1$, with supports of $\phi$ and $\psi$ in $(1,\infty)$ and $(-\infty, 2)$ respectively and $\phi(t)=1$ for $t\geq 2$, and $\psi(t)=1$ for $t\leq 1$.

Let $u\in D^{2,2}(\Rn)$ be a weak solution of \eqref{eq:problem} and $1<q<\starstar$. Then $u$ is smooth away from origin (see \cite[page:\ 235-236]{GGS}) and hence $(x\cdot\na u)\phi_{\epsilon, R}\in C^3_c(\Rn)$. Multiplying equation \eqref{eq:problem} by $(x\cdot\na u)\phi_{\epsilon,R}$ and integrating by parts we obtain

\begin{eqnarray}\label{non1}
\int_{\Rn}\Delta^2 u(x\cdot\na u)\phi_{\epsilon,R}dx
&=&\mu\int_{\Rn}\frac{u(x\cdot\na u)}{|x|^4}\phi_{\epsilon,R}dx+\int_{\Rn}\frac{|u|^{q_{\ba}-2}u}{|x|^{\ba}}(x\cdot\na u)\phi_{\epsilon,R} dx\no\\
&+&\int_{\Rn}|u|^{q-2}u(x\cdot\na u)\phi_{\epsilon,R}dx.
\end{eqnarray}
Proceeding similarly as proved in \cite[Theorem 2.1]{Bh2}, we can show that 
\begin{equation}\label{non2}
\lim_{R\to\infty}\lim_{\epsilon\to 0}\text{RHS}=-\displaystyle\left(\frac{N-4}{2}\right)\left(\mu\int_{\Rn}\frac{|u|^2}{|x|^4}dx+\int_{\Rn}\frac{|u|^{q_{\ba}}}{|x|^{\ba}}dx\right)-\frac{N}{q}\int_{\Rn}|u|^qdx. 
\end{equation}
and
\begin{equation}\label{non3}
\lim_{R\to\infty}\lim_{\epsilon\to 0}\text{LHS}=-\displaystyle\left(\frac{N-4}{2}\right)\int_{\Rn}|\Delta u|^2dx.
\end{equation}
Therefore substituting back \eqref{non2} and \eqref{non3} in \eqref{non1} we obtain
\begin{equation}\label{non4}
-\displaystyle\left(\frac{N-4}{2}\right)\left(\int_{\Rn}|\Delta u|^2dx-\mu\int_{\Rn}\frac{|u|^2}{|x|^4}dx-\int_{\Rn}\frac{|u|^{q_{\ba}}}{|x|^{\ba}}dx\right)=-\frac{N}{q}\int_{\Rn}|u|^qdx.
\end{equation}
Also from the equation \eqref{eq:problem} we have
$$\int_{Rn}|\Delta u|^2dx=\mu\int_{\Rn}\frac{|u|^2}{|x|^4}dx+\int_{\Rn}\frac{|u|^{q_{\ba}}}{|x|^{\ba}}dx+\int_{\Rn}|u|^qdx.$$
Comparing this with \eqref{non4} we obtain
\begin{equation}\label{non5}
\left(\frac{N-4}{2}-\frac{N}{q}\right)\int_{\Rn}|u|^qdx=0.
\end{equation}
As $q<\starstar$, \eqref{non5} implies $u\equiv 0$.
\end{proof}

\vspace{3mm}

{\bf Proof of Theorem \ref{t:nonex}:} By virtue of Proposition \ref{p:nonex}, proof of this theorem follows once we prove $u\in L^q(\Rn)$. To prove this we choose a cut-off function $\phi_{\epsilon,R}\in C^{\infty}_0(\Rn\setminus \{0\})$ as in the proof of Proposition \ref{p:nonex}. Then by choosing $\phi_{\epsilon,R}u$ as a test function we obtain,
\begin{equation}\label{non6}
\int_{\Rn}\Delta u\Delta(\phi_{\epsilon,R}u)dx=\mu\int_{\Rn}\frac{\phi_{\epsilon,R}|u|^2}{|x|^4}dx+\int_{\Rn}\frac{\phi_{\epsilon,R}|u|^{q_{\ba}}}{|x|^\ba}dx+\int_{\Rn}\phi_{\epsilon,R}|u|^qdx.
\end{equation}
Therefore,
\begin{equation*}
\text{LHS}=\int_{\Rn}|\Delta u|^2\phi_{\epsilon,R}dx+\int_{\Rn}u\Delta u\Delta\phi_{\epsilon,R}dx+2\int_{\Rn}\Delta u\na u\cdot\na\phi_{\epsilon,R}dx.
\end{equation*}
Hence from \eqref{non6} we obtain
\begin{eqnarray}\label{non7}
\int_{\Rn}\phi_{\epsilon,R}|u|^qdx &\leq& |\mu|\int_{\Rn}\frac{|u|^2}{|x|^4}dx+\int_{\Rn}\frac{|u|^{q_{\ba}}}{|x|^{\ba}}dx+\int_{\Rn}|\Delta u|^2dx+\int_{\Rn}|u||\Delta u||\Delta\phi_{\epsilon,R}|dx\no\\
&+& 2\int_{\Rn}|\Delta u||\na u||\na\phi_{\epsilon,R}|dx.
\end{eqnarray}
We denote the last two integrals in the RHS by $I_1$ and $I_2$ respectively. Now our aim is to show that $I_1$ and $I_2$ are uniformly bounded by a constant independent of $\epsilon$ and $R$. To see this,
\begin{eqnarray*}
I_1&=&\int_{\Rn}|u||\Delta u||\Delta\phi_{\epsilon,R}|dx\\
&=&\int_{\Rn}|u||\Delta u||\psi_R\Delta\phi_{\epsilon}+2\na\psi_R\na\phi_{\epsilon}+\phi_{\epsilon}\Delta\psi_R|dx\\
&\leq&\displaystyle\int_{\epsilon\leq|x|\leq2\epsilon}|u||\Delta u|\left(\frac{c}{|\epsilon|^2}+\frac{c}{\epsilon|x|}\right)dx+\displaystyle\int_{\{\epsilon\leq|x|\leq2\epsilon\}\cap\{R\leq|x|\leq2R\}}|u||\Delta u|\frac{c}{\epsilon R}dx\\
&+&\displaystyle\int_{R\leq|x|\leq2R}|u||\Delta u|\left(\frac{c}{R^2}+\frac{c}{R|x|}\right)dx
\end{eqnarray*}
Note that in the first integral $\frac{1}{\epsilon}\leq\frac{2}{|x|}$, in the second integral $\frac{1}{\epsilon R}\leq\frac{4}{|x|^2}$ and in the third integral $\frac{1}{R}\leq\frac{2}{|x|}$. Therefore we get,
 
$$I_1\leq C\int_{\Rn}\frac{|u|}{|x|^2}|\Delta u|dx\leq C\displaystyle\left(\int_{\Rn}\frac{|u|
^2}{|x|^4}dx\right)^\frac{1}{2}\left(\int_{\Rn}|\Delta u|^2dx\right)^\frac{1}{2}\leq C'.$$

\begin{eqnarray*}
I_2&=&\int_{\Rn}|\Delta u||\na u||\na\phi_{\epsilon,R}|dx\\
&\leq& c\int_{\epsilon\leq|x|\leq2\epsilon}\frac{|\Delta u||\na u|}{\epsilon}dx+c\int_{R\leq|x|\leq2R}\frac{|\Delta u||\na u|}{R}dx\\
&\leq& C\int_{\Rn}|\Delta u|\frac{|\na u|}{|x|}dx\leq C\displaystyle\left(\int_{\Rn}\frac{|\na u|
^2}{|x|^2}dx\right)^\frac{1}{2}\left(\int_{\Rn}|\Delta u|^2dx\right)^\frac{1}{2}\leq C''
\end{eqnarray*}
where for the last inequality we have used the Hardy's inequality (\cite[(2.1)]{Bh2}) on $\na u$ since $\na u\in D^{1,2}(\Rn)$.

Hence from \eqref{non7} we obtain,
$\int_{\Rn}\phi_{\epsilon,R}|u|^q dx\leq C$, where $C$ is a positive constant independent of $\epsilon$ and $R$. Therefore letting $\epsilon\to 0$ and then $R\to\infty$, we obtain $u\in L^q(\Rn)$. Hence the theorem follows. 
\hfill{$\square$}

\section{Existence Result  when $q=\starstar$}

In this section we consider the equation 
\begin{equation}\label{ex1}
\Delta^2u-\mu\frac{u}{|x|^{4}}=\frac{|u|^{q_{\ba}-2}u}{|x|^{\beta}}+|u|^{\starstar-2}u\quad\text{in}\quad \Rn; \quad u\in D^{2,2}(\Rn).
\end{equation}

We define,
\begin{equation}\label{smu}
S_{\mu,0}=\inf_{u\in D^{2,2}(\Rn),u\not= 0}\frac{\displaystyle\int_{\Rn}\left(|\Delta u|^2-\mu\frac{|u|^2}{|x|^4}\right)dx}{\displaystyle\int_{\Rn}\left(|u|^{\starstar}dx\right)^\frac{2}{\starstar}}
\end{equation}
\begin{equation}\label{smub}
S_{\mu,\ba}=\inf_{u\in D^{2,2}(\Rn),u\not= 0}\frac{\displaystyle\int_{\Rn}\left(|\Delta u|^2-\mu\frac{|u|^2}{|x|^4}\right)dx}{\displaystyle\left(\int_{\Rn}\frac{|u|^{q_{\ba}}}{|x|^{\ba}}dx\right)^\frac{2}{q_{\ba}}}.
\end{equation}
We prove the next theorem in the spirit of \cite{FPR}.
\begin{Theorem}\label{t:ex}
Let $-(N-2)^2\leq\mu<\mu_1$ and $\ba$, $q_{\ba}$ be defined as in \eqref{eq:ass_q_mu}. Then there exists at least one non-trivial weak solution of \eqref{ex1} which belongs to $D^{2,2}(\Rn)\cap C^4(\Rn\setminus\{0\})$.
\end{Theorem}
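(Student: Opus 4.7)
The plan is to apply the Mountain-Pass theorem of Ambrosetti--Rabinowitz to $I$ from \eqref{I-u} and then use a careful concentration analysis to extract a nontrivial solution from the resulting Palais--Smale sequence. First I would verify the MP geometry: by the inequalities \eqref{CKN} and \eqref{Sob} and the equivalence of $||\cdot||$ with the $D^{2,2}$-norm, one has $I(u)\geq \frac{1}{2}||u||^2-C_1||u||^{\qb}-C_2||u||^{\starstar}$, so since $\qb,\starstar>2$ the origin is a strict local minimum with $\inf_{||u||=r} I>0$ for some small $r>0$; on the other hand, for any fixed $\va\in D^{2,2}(\Rn)\setminus\{0\}$, $I(t\va)\to -\infty$ as $t\to\infty$. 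The MP theorem then yields a Palais--Smale sequence $(\un)$ at a level $c>0$.

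The crux of the argument is to establish the strict upper bound
$$c<c^{*}:=\min\left\{\frac{2}{N}\,\smu^{N/4},\;\frac{4-\ba}{2(N-\ba)}\,\smub^{(N-\ba)/(4-\ba)}\right\},$$
these being the two quantized energy levels at which a Sobolev-critical bubble, respectively a weighted-critical bubble, could split off the PS sequence under the dilation \eqref{dial}. I would do this by computing $\max_{t\geq 0}I(tU_{\var})$ on a family of truncated/dilated extremals $U_{\var}$ for either $\smub$ or $\smu$ (both infima are attained for $-(N-2)^2\leq\mu<\mu_1$); the presence of the extra critical nonlinearity contributes a lower-order correction of the correct sign and order to push the maximum strictly below both thresholds simultaneously, which is exactly the balancing phenomenon described in the introduction. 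The sharp hypothesis $\mu\geq -(N-2)^2$ enters precisely here, as it governs the decay/integrability of the limiting ground states needed for the dominant correction to go the right way.

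Having the bound $c<c^{*}$, I would turn to the PS analysis. Boundedness of $(\un)$ in $D^{2,2}(\Rn)$ follows from the standard combination $I(\un)-\frac{1}{\qb}\langle I'(\un),\un\rangle$, which controls both $||\un||^2$ and $\int_{\Rn}|\un|^{\starstar}$. Up to a subsequence $\un\deb u$ in $D^{2,2}(\Rn)$, and passing to the limit in $I'(\un)\to 0$ yields $I'(u)=0$, so $u$ is a weak solution of \eqref{ex1}. If $u\equiv 0$, a Brezis--Lieb type decomposition together with the concentration--compactness principle adapted to \emph{both} critical nonlinearities forces $c$ to split as a finite sum of atoms, each of weight at least $\frac{2}{N}\smu^{N/4}$ or $\frac{4-\ba}{2(N-\ba)}\smub^{(N-\ba)/(4-\ba)}$, contradicting $c<c^{*}$. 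Hence $u\not\equiv 0$, and the biharmonic bootstrap cited on pp.~235--236 of \cite{GGS} gives $u\in C^4(\Rn\setminus\{0\})$.

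The main obstacle is the strict energy estimate $c<c^{*}$: this is a delicate test-function computation that must beat \emph{both} quantized thresholds at once, preventing the asymptotic dominance of either critical term over the other. The hypothesis $-(N-2)^2\leq\mu<\mu_1$ is sharp precisely to make this double estimate work, through its control of the behavior near the origin of the extremals of $\smu$ and $\smub$.
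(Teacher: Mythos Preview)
Your overall architecture (Mountain-Pass geometry, a strict ceiling $c<c^{*}$, then concentration analysis below the threshold) matches the paper's, but two of your steps diverge in ways that matter.

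First, your assertion that ``both infima are attained for $-(N-2)^2\leq\mu<\mu_1$'' is false: $\smu$ is \emph{not} achieved when $\mu<0$ (see \cite[Theorem 1.3]{BM}). The paper therefore splits into two cases. For $\mu\geq 0$ it takes the actual extremal $u$ of $\smu$ (respectively $\smub$) and uses a one-line supremum comparison: dropping the extra critical term from $I(tu)$ gives an upper function $f(t)$ whose maximum equals the corresponding threshold, and strict inequality follows because the dropped term is strictly positive. No truncated profiles, no Brezis--Nirenberg expansion, no lower-order correction are needed. For $\mu\in[-(N-2)^2,0)$ the paper restricts the whole variational problem to radial functions and replaces $\smu,\smub$ in the definition of $c^{*}$ by $\smu^{rad},\smub^{rad}$, which \emph{are} always achieved (\cite[Theorems 1.1, 1.2]{BM}); the same short argument then goes through. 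So your claim that the hypothesis $\mu\geq -(N-2)^2$ ``enters precisely'' in the decay/integrability of extremals for a test-function expansion is not how the paper uses it.

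Second, the recovery of a nontrivial limit when $u_n\rightharpoonup 0$ is not done by a global atom-splitting. Instead the paper proves a dichotomy (Proposition \ref{p:ex2}): for a PS sequence at level $c\in(0,c^{*})$ with $u_n\rightharpoonup 0$, there is $\epsilon>0$ such that $\limsup_n\int_{B_r(0)}|u_n|^{\starstar}$ is either $0$ for all $r$ or $\geq\epsilon$ for all $r$. One then exploits the dilation invariance \eqref{dial}: choose $r_n$ so that the rescaled sequence $v_n(x)=r_n^{(N-4)/2}u_n(r_nx)$ satisfies $\int_{B_1(0)}|v_n|^{\starstar}=\delta'\in(0,\epsilon/2)$; since $(v_n)$ is again PS at level $c$, the dichotomy forbids $v_n\rightharpoonup 0$, and the nonzero weak limit is the solution. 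Your proposed quantized-atom picture would have to handle simultaneously concentration at arbitrary points (from the Sobolev term) and at the origin (from the Hardy--Sobolev term), which the paper's rescaling/dichotomy route sidesteps entirely. The final $C^4$ regularity is obtained via \cite[Lemma B.3]{VVV} and Schauder estimates \cite[Theorem 2.19]{GGS}, not just the bootstrap you cite.
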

\begin{defn}\label{d:PS}
We say $\{u_n\}\subset D^{2,2}(\Rn)$ is a Palais-Smale sequence (in short, PS sequence) of $I$ at  level $c$ if  
$$\lim_{n\to\infty}I(u_n)=c \quad\text{and}\quad I'(u_n)\to 0 \quad\text{in}\ \big(D^{2,2}(\Rn)\big)'$$ where $\big(D^{2,2}(\Rn)\big)'$ is the dual space of $D^{2,2}(\Rn)$.
\end{defn}

Here we recall the following version of the Mountain-Pass theorem by Ambrosetti and Rabinowitz (see \cite{AR})
\begin{Theorem}\label{t:MP}
Let $(V,||.||)$ be a Banach space and $F\in C^1(V)$. we assume that\\
(i)$F(0)=0$,\\
(ii) There exists $\alpha>0$
and $\epsilon>0$ such that $F(u)\geq\alpha$ whenever $||u||=\epsilon$,\\
(iii) There exists $u\in V$ such that $\limsup_{t\to\infty}F(tu)<0$.\\
Let $t_u>0$ be such that $||t_uu||>\epsilon$ and $F(t_uu)<0$ and let
$$c_u:=\inf_{\ga\in\Ga}\sup_{t\in[0,1]}F(\ga(t)),$$
 $\text{where}\quad \Ga:=\{\ga\in C^0([0,1],V):\ga(0)=0 \ \text{and}\  \ga(1)=t_uu\}.$ Then there exists a PS sequence of $F$ at level $c_u$.
\end{Theorem}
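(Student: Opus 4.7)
My plan is to first verify that $c_u$ is well defined and satisfies $\alpha \le c_u < \infty$. Finiteness follows from the straight-line competitor $\gamma_*(t) := t\, t_u u$, which lies in $\Gamma$ and makes $F\circ\gamma_*$ continuous on the compact interval $[0,1]$, so $c_u \le \sup_t F(\gamma_*(t)) < \infty$. For the lower bound, every $\gamma \in \Gamma$ satisfies $\|\gamma(0)\| = 0 < \epsilon < \|\gamma(1)\|$, so continuity of $t \mapsto \|\gamma(t)\|$ and the intermediate value theorem yield some $t_\gamma \in (0,1)$ with $\|\gamma(t_\gamma)\| = \epsilon$. Hypothesis (ii) then gives $\max_t F(\gamma(t)) \ge F(\gamma(t_\gamma)) \ge \alpha$, and taking infimum over $\gamma$ gives $c_u \ge \alpha > 0$.

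I would then argue by contradiction, assuming there is no PS sequence of $F$ at level $c_u$. This means one can find $\bar\delta \in (0,\alpha/3)$ (taken small enough that also $F(t_u u) < -3\bar\delta$) and $\eta > 0$ with
\[
\|F'(u)\|_{V'} \ge \eta \qquad \text{whenever } |F(u) - c_u| \le 2\bar\delta.
\]
The classical pseudo-gradient construction then furnishes a locally Lipschitz vector field $W:\{F'\ne 0\} \to V$ satisfying $\|W(u)\|\le 1$ and $\langle F'(u),W(u)\rangle \ge \tfrac12\|F'(u)\|_{V'}$. Multiplying $W$ by a locally Lipschitz cut-off $\chi$ with $\chi=1$ on $F^{-1}([c_u-\bar\delta,c_u+\bar\delta])$ and $\chi=0$ off $F^{-1}([c_u-2\bar\delta,c_u+2\bar\delta])$, the ODE
\[
\tfrac{d}{ds}\sigma(s,u) = -\chi(\sigma(s,u))\, W(\sigma(s,u)), \qquad \sigma(0,u)=u,
\]
generates a continuous global flow along which $F$ is non-increasing, decreasing at rate at least $\eta/2$ while $\sigma$ stays in the inner band. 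Choosing $s_0 := 4\bar\delta/\eta$ yields the standard deformation property
\[
\sigma\bigl(s_0,\, F^{-1}((-\infty, c_u+\bar\delta])\bigr) \subset F^{-1}((-\infty, c_u-\bar\delta]),
\]
with $\sigma(s_0,\cdot) = \mathrm{id}$ outside $F^{-1}([c_u-2\bar\delta, c_u+2\bar\delta])$.

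To close the argument, pick $\gamma_0 \in \Gamma$ with $\max_t F(\gamma_0(t)) < c_u + \bar\delta$, which exists by the definition of $c_u$ as an infimum. Since $F(0)=0$ and $F(t_u u) < -3\bar\delta$ both lie below $c_u - 2\bar\delta$, the endpoints $0$ and $t_u u$ are fixed by $\sigma(s_0,\cdot)$. Therefore $\tilde\gamma(t):=\sigma(s_0,\gamma_0(t))$ still belongs to $\Gamma$, yet $\max_t F(\tilde\gamma(t)) \le c_u - \bar\delta$, contradicting the definition of $c_u$.

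The genuinely delicate step, and in my view the only real obstacle, is the construction of the pseudo-gradient field $W$. In a general Banach space $F'(u)$ lives in the dual $V'$ and has no canonical realization as a vector in $V$, so one must select at each $u$ with $F'(u)\ne 0$ an "almost normalized" direction and glue these choices together using a locally finite partition of unity on the paracompact metric space $\{F'\ne 0\}$. Once $W$ is in place the ODE is solved by Picard--Lindel\"of on intervals where $\sigma$ remains in $\{\chi>0\}$ (its right-hand side is globally bounded, so no blow-up occurs), and the remaining inequality chase is routine.
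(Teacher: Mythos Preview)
Your argument is the standard deformation/pseudo-gradient proof of the Mountain--Pass theorem and is essentially correct. The lower bound $c_u\ge\alpha$ via the intermediate value theorem, the quantitative failure of the PS condition on a band $F^{-1}([c_u-2\bar\delta,c_u+2\bar\delta])$, the construction of a bounded pseudo-gradient flow that pushes the sublevel set $\{F\le c_u+\bar\delta\}$ into $\{F\le c_u-\bar\delta\}$ while fixing the endpoints $0$ and $t_uu$, and the resulting contradiction with the definition of $c_u$ are all carried out correctly. One minor redundancy: your requirement $F(t_uu)<-3\bar\delta$ is not needed, since from $c_u\ge\alpha$ and $\bar\delta<\alpha/3$ one already has $c_u-2\bar\delta>0>F(t_uu)$ and $c_u-2\bar\delta>0=F(0)$, so both endpoints automatically lie outside the deformation band.

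By contrast, the paper does not prove Theorem~\ref{t:MP} at all: it is simply quoted as a known result from Ambrosetti and Rabinowitz \cite{AR}. So your proposal supplies a complete (and standard) proof where the paper only gives a citation; there is nothing to compare at the level of strategy, since the paper's ``proof'' is a reference.
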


\subsection{Case 1: $\mu\geq 0$}
\begin{Proposition}\label{p:ex1}
There exists a PS sequence of $I$ at a level $c$ where 
\begin{equation}\label{ex2}
0<c<c^{*}:=\text{min}\displaystyle\left\{\frac{2}{N}S_{\mu,0}^\frac{N}{4}, \ \frac{1}{2}\left(\frac{4-\ba}{N-\ba}\right)S_{\mu,
\ba}^\frac{N-\ba}{4-\ba}\right\}.
\end{equation}
\end{Proposition}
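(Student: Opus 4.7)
The plan is to apply the Mountain--Pass Theorem \ref{t:MP} to $I$, choosing a test function so that the mountain--pass level sits strictly below $c^{*}$. First I would verify the geometry: $I(0)=0$ is trivial, and combining \eqref{Sob}, \eqref{CKN}, and the equivalence of $\|\cdot\|$ with $(\int_{\Rn}|\De u|^{2}dx)^{1/2}$ yields
$$I(u)\geq \tfrac{1}{2}\|u\|^{2}-C_{1}\|u\|^{q_{\ba}}-C_{2}\|u\|^{\starstar},$$
so that $I(u)\geq \alpha>0$ on some sphere $\|u\|=\epsilon$, since $q_{\ba},\starstar>2$. For any fixed $u\not\equiv 0$, both nonlinear integrals are strictly positive, hence $I(tu)\to-\infty$ as $t\to\infty$, and one picks $t_{u}$ large with $\|t_{u}u\|>\epsilon$ and $I(t_{u}u)<0$. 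Theorem \ref{t:MP} then delivers a PS sequence at the level $c_{u}:=\inf_{\ga\in\Ga}\sup_{t\in[0,1]}I(\ga(t))$, and evaluating on the straight path $\ga(t)=t\,t_{u}u$ gives $\alpha\leq c_{u}\leq \sup_{s>0}I(su)$.

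The core task is therefore to exhibit $u$ with $\sup_{s>0}I(su)<c^{*}$. The key observation is that an extremizer of either Sobolev-type quotient defining $c^{*}$ immediately produces this strict inequality, because the second critical term, absent from the corresponding single--critical problem, contributes a strictly positive integral that is subtracted from $I$. I would split according to which quantity realizes the minimum in \eqref{ex2}. If $c^{*}=\frac{2}{N}S_{\mu,0}^{N/4}$, take $u=U$ an extremizer for $S_{\mu,0}$, which is available in the range $0\leq\mu<\mu_{1}$ from the biharmonic analogues of the Talenti/Rellich bubbles in the references cited in the introduction. A direct computation using $\|U\|^{2}/(\int_{\Rn}|U|^{\starstar}dx)^{2/\starstar}=S_{\mu,0}$ gives
$$\sup_{s>0}\Big[\tfrac{s^{2}}{2}\|U\|^{2}-\tfrac{s^{\starstar}}{\starstar}\int_{\Rn}|U|^{\starstar}dx\Big]=\tfrac{2}{N}S_{\mu,0}^{N/4},$$
and since $\int_{\Rn}|U|^{q_{\ba}}|x|^{-\ba}dx>0$, evaluating $I(sU)$ at its positive maximizer yields $\sup_{s>0}I(sU)<\frac{2}{N}S_{\mu,0}^{N/4}=c^{*}$. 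If instead $c^{*}=\tfrac{1}{2}\tfrac{4-\ba}{N-\ba}S_{\mu,\ba}^{(N-\ba)/(4-\ba)}$, take $u=V$ an extremizer for $S_{\mu,\ba}$ and argue symmetrically using $\int_{\Rn}|V|^{\starstar}dx>0$. In either case $0<c_{u}<c^{*}$, proving the proposition.

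The main obstacle is precisely the strict energy comparison of the second step: it relies on having an actual extremizer, rather than merely a minimizing sequence, for $S_{\mu,0}$ and $S_{\mu,\ba}$ in the range $0\leq \mu<\mu_{1}$, since a near-extremal with deficit $\eta>0$ only gives $\sup_{s}I(sU_{\eta})<\frac{2}{N}(S_{\mu,0}+\eta)^{N/4}$, which need not lie below $c^{*}$. Existence of these extremizers in the biharmonic Rellich setting is the non-trivial input; the mountain--pass geometry and the passage from $\sup_{s}I(su)<c^{*}$ to a PS sequence at level below $c^{*}$ are routine.
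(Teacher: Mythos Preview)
Your proposal is correct and follows essentially the same strategy as the paper: verify the mountain--pass geometry for $I$, then pick an extremizer of whichever of $S_{\mu,0}$ or $S_{\mu,\ba}$ realizes $c^{*}$ and use the strictly positive contribution of the other critical term to force $\sup_{s>0}I(su)<c^{*}$. The paper phrases the strict inequality via a short contradiction argument (assuming $c_u=\sup_t f(t)$ and comparing maximizers), whereas you argue it directly, but this is a cosmetic difference; you also correctly isolate the only non-trivial input, namely the existence of extremizers for $S_{\mu,0}$ and $S_{\mu,\ba}$ when $0\le\mu<\mu_1$, which the paper draws from \cite{BM} and \cite{Swan}.
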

\begin{proof}
{\bf Step 1:} First we will prove that $I$, as defined in \eqref{I-u}, satisfy all the conditions in Theorem \ref{t:MP}. To see this,\\
(i)$I(0)=0$.\\
(ii) Using the definition of $\smu$ and $\smub$ we obtain
\begin{eqnarray*}
I(u)&\geq&  \frac{1}{2}||u||^2-\frac{\smub^{-\frac{q_{\ba}}{2}}}{q_{\ba}}||u||^{q_{\ba}}-\frac{\smu^{-\frac{\starstar}{2}}}{\starstar}||u||^{\starstar}\\
&=&||u||^2\displaystyle\left[\frac{1}{2}-\frac{\smub^{-\frac{q_{\ba}}{2}}}{q_{\ba}}||u||^{q_{\ba}-2}-\frac{\smu^{-\frac{\starstar}{2}}}{\starstar}||u||^{\starstar-2}\right].
\end{eqnarray*}
As $q_{\ba}>2$, we can choose $\epsilon>0$ small enough such that if $||u||=\epsilon$, terms in the bracket of the above expression is strictly positive and therefore, $I(u)\geq \alpha>0$ when $||u||=\epsilon$. \\
(iii)Given $u\in D^{2,2}(\Rn)$ such that $u\not=0$, it is easy to see from the definition of $I$ that $\lim_{t\to\infty}I(tu)=-\infty$. Then we choose $t_u>0$ corresponding to $u$ such that $I(tu)<0$ for all $t>t_u$ and $||t_uu||>\epsilon$. We define,
$$\Ga_{u}:=\{\ga\in C^0([0,1],D^{2,2}(\Rn)):\ga(0)=0,  \ \ga(1)=t_uu\}$$ and
$$c_{u}:=\inf_{\ga\in\Ga_{u}}\sup_{t\in[0,1]}I(\ga(t)).$$
Therefore by applying Theorem \ref{t:MP}, we obtain a PS sequence of $I$ at a level $c_{u}$. Also by the definition of $c_{u}$, we have $c_{u}\geq\alpha>0$.\\

{\bf Step 2:} We aim to show that there exists $u\in D^{2,2}(\Rn)$ such that $u\not\equiv 0$ and 
\begin{equation}\label{ex3}
0<c_u<c^{*}
\end{equation}
where $c_u$ is as defined in step 1.\\
To prove this, let $u\in D^{2,2}(\Rn)$ be the non-negative extremal of $\smu$. Existence of such $u$ was proved in Theorem 1.3 and Theorem 5.1 in \cite{BM} for the case $\mu>0$ and in \cite{Swan} for the case $\mu=0$. Corresponding to that $u$, we define $t_u$ and $c_u$ as in step 1, which yields,
\begin{equation}\label{ex4}
0<c_u\leq\sup_{t\geq 0}I(tu)\leq\sup_{t\geq 0}f(t)
\end{equation}
where $f(t):=\frac{t^2}{2}||u||^2-\frac{t^{\starstar}}{\starstar}\int_{\Rn}|u|^{\starstar}dx$.
As $u$ is the extremal for $\smu$, by standard method it can be shown that upto a multiplicative constant $u$ is a non-negative weak solution of  
\begin{equation}\label{ex5}
\Delta^2 v-\mu\frac{v}{|x|^4}=v^{\starstar-1} \quad\text{in}\ \Rn.
\end{equation}
If $\theta>0$ is the constant for which $\theta u$ is a solution of \eqref{ex5}, then $||u||^2=\theta^{\starstar-2}\int_{\Rn}|u|^{\starstar}dx$. Therefore by \eqref{ex4} and the definition of $f(t)$ there, we obtain
\begin{eqnarray*}
0<c_u&\leq& \sup_{t\geq 0}\displaystyle\left(\frac{t^2}{2}\theta^{\starstar-2}-\frac{t^{\starstar}}{\starstar}\right)\int_{\Rn}|u|^{\starstar}dx\\
&=&\displaystyle\left(\frac{1}{2}-\frac{1}{\starstar}\right)\theta^{\starstar}\int_{\Rn}|u|^{\starstar}dx=\frac{2}{N}\theta^{\starstar}\int_{\Rn}|u|^{\starstar}dx=\frac{2}{N}\smu^{\frac{N}{4}}.
\end{eqnarray*}
If the equality would hold in the above inequality i.e. $c_u=\frac{2}{N}\smu^{\frac{N}{4}}$, then 
$$0<c_u=\sup_{t\geq 0}I(tu)=\sup_{t\geq 0}f(t).$$ Let $t_1$ and $t_2$ be the two points where the two supremum are attained respectively. Then we get, 
$$f(t_1)-\frac{t_1^{q_{\ba}}}{q_{\ba}}\int_{\Rn}\frac{|u|^{q_{\ba}}}{|x|^{\ba}}dx=f(t_2),$$ which in turn implies, $f(t_2)<f(t_1)$ and this is a contradiction to the fact that $t_2$ is the supremum of $f$. Therefore we obtain $0<c_u<\frac{2}{N}\smu^{\frac{N}{4}}.$

Now note that if $\frac{2}{N}\smu^{\frac{N}{4}}\leq\frac{1}{2}\left(\frac{4-\ba}{N-\ba}\right)S_{\mu,
\ba}^\frac{N-\ba}{4-\ba}$, then we are done. Otherwise we choose $u\in D^{2,2}(\Rn)\setminus\{0\}$ which is a non-negative extremal of $\smub$ (which exists by Theorem 1.3 and Theorem 5.1 in \cite{BM}). Now we Proceed as before and here we  replace $f$ in \eqref{ex4} by $g$ where $$g(t):=\frac{t^2}{2}||u||^2-\frac{t^{q_{\ba}}}{q_{\ba}}\int_{\Rn}\frac{|u|^{q_{\ba}}}{|x|^{\ba}}dx$$ which gives now the contradiction $g(t_1)-\frac{t^{\starstar}}{\starstar}\displaystyle\int_{\Rn}|u|^{\starstar}dx=g(t_2)$ as before. Hence the claim in Step 2 follows.
\end{proof}
\begin{Proposition}\label{p:ex2}
Let $\{u_n\}\subset D^{2,2}(\Rn)$ be a PS sequence of $I$ at a level $c\in(0, c^{*})$ and $u_n\deb 0$ in $D^{2,2}(\Rn)$. Then there exists $\epsilon=\epsilon(N, \mu, c, \ba)>0$ such that 
$$\text{either}\ \limsup_{n\to\infty}\int_{B_r(0)}|u_n|^{\starstar}dx=0 \quad\text{or}\quad \limsup_{n\to\infty}\int_{B_r(0)}|u_n|^{\starstar}dx\geq\epsilon \quad\forall\ r>0. $$
\end{Proposition}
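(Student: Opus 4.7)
The plan is to localise the Palais--Smale identity near the origin via a cutoff test function of the form $\phi^2\un$, and then combine it with the definitions of $\smu$ and $\smub$ and a suitable H\"older inequality to produce a self-improving inequality that forces the dichotomy.

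Fix $r>0$ and pick $\phi\in C_c^\infty(\Rn)$ with $0\le\phi\le 1$, $\phi\equiv 1$ on $B_r(0)$, and $\mathrm{supp}\,\phi\subset B_{2r}(0)$. Testing $\langle I'(\un),\phi^2\un\rangle\to 0$ and expanding $\Delta(\phi^2\un)$, every cross term produced by the integration by parts carries a derivative of $\phi$ and is therefore supported in $\overline{B_{2r}\setminus B_r}$, a compact set bounded away from the origin. Since $\un\deb 0$ in $D^{2,2}(\Rn)$ and $D^{2,2}(\Rn)$ embeds compactly into $H^1(B_{2r})$, we have $\un\to 0$ strongly in $H^1(B_{2r})$, so every cross term is $o(1)$. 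An analogous expansion shows $\int|\Delta(\phi\un)|^2\,dx=\int\phi^2|\Delta\un|^2\,dx+o(1)$, and one arrives at the localised identity
\begin{equation*}
\mathcal E_n:=\int|\Delta(\phi\un)|^2\,dx-\mu\int\frac{|\phi\un|^2}{|x|^4}\,dx=\int\phi^2|\un|^{\starstar}\,dx+\int\phi^2\frac{|\un|^{\qb}}{|x|^{\ba}}\,dx+o(1).
\end{equation*}

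Next I would combine the definitions of $\smu$ and $\smub$ (applied to $\phi\un\in D^{2,2}(\Rn)$) with H\"older's inequality at exponents $(\starstar/2,\starstar/(\starstar-2))$ and $(\qb/2,\qb/(\qb-2))$ respectively to obtain
\begin{equation*}
\int\phi^2|\un|^{\starstar}\,dx\le\smu^{-1}\mathcal E_n\,\Theta_n^{(\starstar-2)/\starstar},\qquad\int\phi^2\frac{|\un|^{\qb}}{|x|^{\ba}}\,dx\le\smub^{-1}\mathcal E_n\,\Xi_n^{(\qb-2)/\qb},
\end{equation*}
with the local masses $\Theta_n:=\int_{B_{2r}}|\un|^{\starstar}\,dx$ and $\Xi_n:=\int_{B_{2r}}|\un|^{\qb}/|x|^{\ba}\,dx$. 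Substituting back into the localised identity gives
\begin{equation*}
\mathcal E_n\bigl[1-\smu^{-1}\Theta_n^{(\starstar-2)/\starstar}-\smub^{-1}\Xi_n^{(\qb-2)/\qb}\bigr]\le o(1).
\end{equation*}
Passing to a convergent subsequence, either $\mathcal E_n\to 0$ (whence $\int\phi^2|\un|^{\starstar}\to 0$ and therefore $\int_{B_r}|\un|^{\starstar}\to 0$, giving the first alternative), or the bracket tends to a non-positive value, which forces $\limsup\Theta_n\ge(\smu/2)^{N/4}$ or $\limsup\Xi_n\ge(\smub/2)^{(N-\ba)/(4-\ba)}$.

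The $\Xi_n$-alternative would be reduced to a lower bound on $\Theta_n$ by the H\"older interpolation
\begin{equation*}
\int\frac{|u|^{\qb}}{|x|^{\ba}}\,dx\le\Bigl(\int\frac{|u|^2}{|x|^4}\,dx\Bigr)^{\ba/4}\Bigl(\int|u|^{\starstar}\,dx\Bigr)^{(4-\ba)/4}
\end{equation*}
restricted to $B_{2r}$, together with the Rellich bound $\int|u|^2/|x|^4\le\mu_1^{-1}\|u\|^2$ and the a priori estimate on $\|\un\|$ (obtained from $I(\un)\to c$ and $\langle I'(\un),\un\rangle=o(1)$, which is where the dependence $\epsilon=\epsilon(N,\mu,c,\ba)$ enters). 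The principal subtlety I foresee is the scale mismatch: the vanishing alternative delivers $\int_{B_r}|\un|^{\starstar}\to 0$, while the quantitative bound is produced on $B_{2r}$. I would close this gap by rerunning the whole argument with $\phi\equiv 1$ on $B_{r/2}$ and $\mathrm{supp}\,\phi\subset B_r$, so that the lower bound now sits on $B_r$; the residual alternative $\int_{B_{r/2}}|\un|^{\starstar}\to 0$ with positive mass in the annulus $B_r\setminus B_{r/2}$ would be excluded by a standard Sobolev concentration--compactness argument on the annulus (where the Rellich weight is regular), whose bubble threshold $(S^{**})^{N/4}$ is absorbed into the final $\epsilon$.
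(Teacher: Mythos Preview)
Your localised identity with $\phi^2 u_n$ and the Sobolev/Hardy--Sobolev bounds are exactly the tools the paper uses, but the paper organises them in a way that avoids the scale mismatch you are fighting at the end. The paper's proof has a preliminary Step~1 that you are missing: for \emph{any} compact $D\subset\Rn\setminus\{0\}$ one has
\[
\lim_{n\to\infty}\int_D|\Delta u_n|^2\,dx=\lim_{n\to\infty}\int_D|u_n|^{\starstar}\,dx=0.
\]
This is obtained by taking $\phi\in C_c^\infty(\Rn\setminus\{0\})$ with $\phi\equiv 1$ on $D$, running your same test-function computation, and then using the global bound $\int_{\Rn}|u_n|^{\starstar}\le \tfrac{cN}{2}+o(1)$ (from $I(u_n)-\tfrac12\langle I'(u_n),u_n\rangle=c+o(1)$) together with $c<c^*\le\tfrac{2}{N}\smu^{N/4}$ to force $\|\phi u_n\|\to 0$. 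Once Step~1 is in hand, the three quantities
\[
I_1=\limsup_n\int_{B_r}|u_n|^{\starstar},\quad I_2=\limsup_n\int_{B_r}\frac{|u_n|^{\qb}}{|x|^{\ba}},\quad I_3=\limsup_n\int_{B_r}\Bigl(|\Delta u_n|^2-\mu\frac{|u_n|^2}{|x|^4}\Bigr)
\]
are \emph{independent of $r$}, and there is no mismatch between $B_r$ and $B_{2r}$ at all. The paper then derives the two cross-inequalities $I_1^{2/\starstar}\le C_1 I_2$ and $I_2^{2/\qb}\le C_2 I_1$ (using $I_3\le I_1+I_2$, $I_1^{2/\starstar}\le\smu^{-1}I_3$, $I_2^{2/\qb}\le\smub^{-1}I_3$, and again $c<c^*$), which immediately give $I_1=0$ or $I_1\ge\epsilon$.

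The gap in your write-up is precisely the last paragraph. Your ``standard Sobolev concentration--compactness argument on the annulus'' is not free: what makes annular bubbles impossible here is that $\{u_n\}$ is a PS sequence for $I$ at level $c<c^*$, and to extract the quantisation (or, better, the vanishing) you must show that on the annulus the singular terms drop out and the local energy identity reduces to the pure Sobolev one. That is exactly the content of the paper's Step~1, and once you prove it you no longer need your rerun-at-$r/2$ detour or the H\"older interpolation of $\Xi_n$ against $\Theta_n$. In short: move your cutoff \emph{away} from the origin first and prove vanishing on compacta of $\Rn\setminus\{0\}$; then the dichotomy at the origin follows with no scale issues.
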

\begin{proof}
{\bf Step 1:} Let $D$ be an arbitrary compact set in $\Rn\setminus\{0\}$. Then we claim, upto a subsequence
\begin{equation}\label{ex6}
\lim_{n\to\infty}\int_{D}|\Delta u_n|^2dx=\lim_{n\to\infty}\int_{D}\frac{|u_n|^2}{|x|^4}dx=\lim_{n\to\infty}\int_{D}\frac{|u_n|^{q_{\ba}}}{|x|^{\ba}}dx=\lim_{n\to\infty}\int_{D}|u_n|^{\starstar}dx=0.
\end{equation} 
To see this, note that $u_n\deb 0$ in $D^{2,2}(\Rn)$ implies that $u_n\longrightarrow 0$ in $L^q_{loc}(\Rn)$ for $q\in[2,\starstar)$. Since $\ba>0$, we have $q_{\ba}<\starstar$ and therefore
\begin{equation}\label{ex7}
\lim_{n\to\infty}\int_{D}\frac{|u_n|^2}{|x|^4}dx=\lim_{n\to\infty}\int_{D}\frac{|u_n|^{q_{\ba}}}{|x|^{\ba}}dx=0.
\end{equation}
Concerning the other two inequality, first we note that as $u_n\deb 0$ in $D^{2,2}(\Rn)$,  we obtain $\{u_n\}$ is a bounded sequence in $D^{2,2}(\Rn)$ and therefore,
$$I(u_n)-\frac{1}{2}(I'(u_n),u_n)= c+o(1)||u_n||= c+o(1),$$
which in turn implies, 
\begin{equation}\label{ex7'} 
 \displaystyle\left(\frac{1}{2}-\frac{1}{q_{\ba}}\right)\int_{\Rn}\frac{|u_n|^{q_{\ba}}}{|x|^{\ba}}dx+\left(\frac{1}{2}-\frac{1}{\starstar}\right)\int_{\Rn}|u_n|^{\starstar}dx= c+o(1) \quad\text{as}\ n\to\infty.
 \end{equation} 
Therefore, \begin{equation}\label{ex8}
\int_{\Rn} |u_n|^{\starstar}dx\leq \frac{cN}{2}+o(1) \quad\text{and}\ \int_{\Rn}\frac{|u_n|^{q_{\ba}}}{|x|^{\ba}}dx\leq 2c\left(\frac{N-\ba}{4-\ba}\right)+o(1) \quad\text{as}\ n\to\infty.
\end{equation}
Let $\phi\in C^{\infty}_0(\Rn\setminus \{0\})$ such that $\phi\equiv 1$ in $D$. By using Holder inequality, Rellich's compactness theorem and Sobolev inequality it's easy to see that
\begin{equation}\label{ex9}
\int_{\Rn}\Delta u_n\Delta(\phi^2u_n)dx=\int_{\Rn}|\Delta(\phi u_n)|^2dx+o(1).
\end{equation}
Therefore by taking $\phi^2u_n$ as a test function in the equation \eqref{ex1}  and using \eqref{ex7}  and the uniform boundness of $u_n$ we obtain,
\begin{eqnarray*}
o(1)=(I'(u_n),\phi^2u_n) &=&\int_{\Rn}\Delta u_n\Delta(\phi^2u_n)dx-\int_{\Rn}\phi^2|u_n|^{\starstar}dx+o(1)\\
&=&\int_{\Rn}|\Delta(\phi u_n)|^2dx-\int_{\Rn}(\phi u_n)^2|u_n|^{\starstar-2}dx+o(1)\\
&\geq& ||\phi u_n||^2-\int_{\Rn}(\phi u_n)^2|u_n|^{\starstar-2}dx+o(1)
\end{eqnarray*}
Therefore  by using Holder inequality and the definition of $\smu$ we reduce,
\begin{equation}\label{ex10}
 ||\phi u_n||^2\leq \smu^{-1}||\phi u_n||^2\displaystyle\left(\int_{\Rn}|u_n|^{\starstar}dx\right)^\frac{\starstar-2}{\starstar}+o(1).
\end{equation}
Now using \eqref{ex8} in \eqref{ex10}, we obtain
$$||\phi u_n||^2\displaystyle\left[1-\smu^{-1}\left(\frac{cN}{2}\right)^\frac{4}{N}\right]\leq o(1).$$
Here we note that as $c<c^{*}$, the term inside the bracket, in the LHS of above expression, is strictly positive. This in turn implies $\lim_{n\to\infty}||\phi u_n||^2=0$. Therefore $\lim_{n\to\infty}\int_{D}|\Delta u_n|^2dx=0$ and from this we conclude by Sobolev inequality,  $\lim_{n\to\infty}\int_{D}|u_n|^{\starstar}dx=0$ . This completes step 1.\\

For $r>0$, we define
$$I_1=\limsup_{n\to\infty}\int_{B_r(0)}|u_n|^{\starstar}dx, \quad I_2=\limsup_{n\to\infty}\int_{B_r(0)}\frac{|u_n|^{q_{\ba}}}{|x|^{\ba}}dx$$ and $$ I_3=\limsup_{n\to\infty}\int_{B_r(0)}\displaystyle\left(|\Delta u_n|^2-\mu\frac{|u_n|^2}{|x|^4}\right) dx.$$
By step 1, the above three quantities are well defined and independent of the choice of  $r>0$. 

\vspace{2mm}

{\bf Step 2:} In this step we complete the proof of this Proposition. 
Let $\phi\in C^{\infty}_0(\Rn)$ such that $\phi\equiv 1$ in $B_r(0)$. Then $\displaystyle\left(\int_{\Rn}|\phi u_n|^{\starstar}dx\right)^\frac{2}{\starstar}\leq\smu^{-1}||\phi u_n||^2$. Application of  step 1 in this expression yields,
$$\displaystyle\left(\int_{B_r(0)}|u_n|^{\starstar}dx\right)^\frac{2}{\starstar}\leq\smu^{-1}\int_{B_r(0)}\left(|\Delta u_n|^2-\mu\frac{|u_n|^2}{|x|^4}\right)dx+o(1),$$ which implies $I_1^\frac{2}{\starstar}\leq\smu^{-1}I_3$. Similarly we can also prove $I_2^\frac{2}{q_{\ba}}\leq\smub^{-1}I_3$. On the other hand as $\lim_{n\to\infty}(I'(u_n),\phi u_n)=0$, using step 1 and the definition of $I_1$, $I_2$ and $I_3$ we  obtain $I_3\leq I_1+I_2$. Therefore, $I_1^\frac{2}{\starstar}\leq\smu^{-1}I_1+\smu^{-1}I_2$. Therefore, 
$$I_1^\frac{2}{\starstar}\displaystyle\left(1-\smu^{-1}I_1^\frac{4}{N}\right)\leq\smu^{-1}I_2.$$ 
We note that, from \eqref{ex8} we have $I_1\leq\frac{cN}{2}$. Therefore we obtain $$I_1^\frac{2}{\starstar}\displaystyle\left(1-\smu^{-1}\left(\frac{cN}{2}\right)^\frac{4}{N}\right)\leq\smu^{-1}I_2.$$ 
Therefore since $c<c^{*}<\frac{2}{N}\smu^\frac{N}{4}$, we obtain $I_1^\frac{2}{\starstar}\leq C_1I_2$ for some constant $C_1=C_1(N,\mu, c)>0$. Similarly we can prove that $I_2^\frac{2}{q_{\ba}}\leq C_2I_1$ for some $C_2=C_2(N,\mu,c,\ba)>0$. Combining these two inequalities we obtain either $I_1=I_2=0$ or there exists  $\epsilon=\epsilon(N,\mu, \ba, c)>0$ such that $\{I_1\geq\epsilon\quad\text{and}\quad I_2\geq\epsilon\}$.
\end{proof}

\vspace{3mm}

{\bf Proof of Theorem \ref{t:ex} in the case $\mu\geq 0$:} Let $\{u_n\}$ be a PS sequence of $I$ at level $c\in(0, c^{*})$. We claim that $\limsup_{n\to\infty}\int_{\Rn}|u_n|^{\starstar}dx>0$. 
We prove this claim by method of contradiction. Therefore we assume,
\begin{equation}\label{ex11}
\lim_{n\to\infty}\int_{\Rn}|u_n|^{\starstar}dx=0
\end{equation}
Using \eqref{ex11}, it is easy to check that $\{u_n\}$ is bounded. Indeed we have, 
\begin{equation*}
c+o(1)||u_n||\geq I(u_n)-\frac{1}{q_{\ba}}(I'(u_n), u_n)=\displaystyle\left(\frac{1}{2}-\frac{1}{q_{\ba}}\right)||u_n||^2+o(1)
\end{equation*}
 and hence the boundedness follows. Now using \eqref{ex11} , we estimate $(I'(u_n), u_n)$ and obtain $$o(1)||u_n||=||u_n||^2-\int_{\Rn}\frac{|u_n|^{q_{\ba}}}{|x|^{\ba}}dx+o(1).$$ As $u_n$ is bounded, from the above expression we obtain
\begin{equation*}
||u_n||^2 =\displaystyle\int_{\Rn}\frac{|u_n|^{q_{\ba}}}{|x|^{\ba}}dx+o(1),
\end{equation*}
which by \eqref{smub} yields
\begin{eqnarray}\label{ex12}
&\displaystyle\left(\int_{\Rn}\frac{|u_n|^{q_{\ba}}}{|x|^{\ba}}dx\right)^\frac{2}{q_{\ba}}\smub \leq  \int_{\Rn}\frac{|u_n|^{q_{\ba}}}{|x|^{\ba}}dx+o(1),\no\\
&\displaystyle\left(\int_{\Rn}\frac{|u_n|^{q_{\ba}}}{|x|^{\ba}}dx\right)^\frac{2}{q_{\ba}}\left[\smub-\left(\int_{\Rn}\frac{|u_n|^{q_{\ba}}}{|x|^{\ba}}dx\right)^\frac{4-\ba}{N-\ba}\right] \leq o(1).
\end{eqnarray}
As in \eqref{ex8}, we can prove that $\displaystyle\int_{\Rn}\frac{|u_n|^{q_{\ba}}}{|x|^{\ba}}dx\leq 2c\frac{N-\ba}{4-\ba}+o(1)$ as $n\to\infty$. Plugging this inequality in \eqref{ex12} and using the upper bound of $c$, we obtain
\begin{equation}\label{un-qb}
\lim_{n\to\infty}\int_{\Rn}\frac{|u_n|^{q_{\ba}}}{|x|^{\ba}}dx=0.
\end{equation}
Therefore \eqref{un-qb}, along with \eqref{ex11} is a contradiction to \eqref{ex7'} as $c>0$ and thus the claim follows i.e. 
$$\limsup_{n\to\infty}\int_{\Rn}|u_n|^{\starstar}dx>0.$$
We define $\limsup_{n\to\infty}\int_{\Rn}|u_n|^{\starstar}dx=d$, which is positive by the claim above. Since $\{u_n\}$ is bounded, upto a subsequence $u_n\deb u$ for some $u\in D^{2,2}(\Rn)$. If $u\not= 0$, we are done as $u$ will be the nontrivial weak solution of \eqref{ex1}.  Therefore we may assume that $u_n\deb 0$ in $D^{2,2}(\Rn)$. Here we
set, $\delta=\text{min}(d, \frac{\epsilon}{2})$, where $\epsilon>0$ is the same which we obtain from Proposition \ref{p:ex2}. Define
$$Q_n(r)=\int_{B_{r}(0)}|u_n|^{\starstar}dx.$$
Therefore for any $\delta'\in(0,\delta)$, there exists a sequence $r_n\in\R^{+}$ such that upto a subsequence $Q_n(r_n)=\delta'.$ We define, $v_n(x)=r_n^\frac{N-4}{2}u_n(r_nx)$. Then $v_n\in D^{2,2}(\Rn)$ and satisfies 
\begin{equation}\label{ex13}
\int_{B_1(0)}|v_n|^{\starstar}dx=\delta'.
\end{equation} 
It is easy to check that $\{v_n\}$ is a PS sequence of $I$ at level $c$.  By the scaling invariance of the norm in $D^{1,2}(\Rn)$ and the boundedness of the sequence $\{u_n\}$, it follows  that $\{v_n\}$ is bounded in $D^{2,2}(\Rn)$. Therefore,  we may assume that there exists $v_0\in D^{2,2}(\Rn)$ such that, upto a subsequence $v_n\deb v_0$ in $D^{2,2}(\Rn)$.\\

Claim: $v_0\not=0$.\\

Suppose the claim is not true. Therefore, $\{v_n\}$ satisfies the properties of Proposition \ref{p:ex2} with the same $\epsilon$ mentioned there. Therefore we have 
\begin{equation}\label{ex14}
\text{either }\quad\lim_{n\to\infty}\int_{B_1(0)}|v_n|^{\starstar}dx=0 \quad\text{or}\quad \limsup_{n\to\infty}\int_{B_1(0)}|v_n|^{\starstar}dx\geq\epsilon.
\end{equation}
Since in \eqref{ex13}, $0<\delta'<\frac{\epsilon}{2}$, \eqref{ex14} is a contradiction to \eqref{ex13} and therefore $v_0\not=0$.\\
Also note that as $\{v_n\}$ is a PS sequence for $I$, we have 
\begin{equation}\label{ex15}
\displaystyle\int_{\Rn}\left[\Delta v_n\Delta\phi-\mu\frac{v_n\phi}{|x|^4}\right]dx=\int_{\Rn}\frac{|v_n|^{q_{\ba}-2}v_n\phi}{|x|^{\ba}}dx+\int_{\Rn}|v_n|^{\starstar-2}v_n\phi dx+o(1) \quad\forall\ \phi\in D^{2,2}(\Rn).
\end{equation}
Using Vitaly's convergence theorem, we pass to the limit in \eqref{ex15} and we obtain $v_0$ is a nontrivial weak solution of \eqref{ex1}. Therefore we can write $\Delta^2 v_0=g(x,v_0)v_0$ where 
$$g(x,v_0)=\frac{\mu}{|x|^4}+\frac{|v_0|^{q_{\ba}-2}}{|x|^{\ba}}+|v_0|^{\starstar-2}.$$ If $D$ is an arbitrary compact subset of $\Rn\setminus\{0\}$, then there exist two constants $C_1(D), C_2(D)>0$ such that
$|g(x,v_0)|\leq C_1(D)|v_0|^{\starstar-2}+C_2(D)$ for every $x\in D$. Therefore it follows from \cite[Lemma B.3]{VVV} that $v_0\in C^3(\bar D)$.
As $D$ is arbitrary compact set in $\Rn\setminus\{0\}$, we obtain $u\in C^3(\Rn\setminus\{0\})$.  Using the $C^3$ regularity outside the origin and the fact that nonlinear part in \eqref{ex1} depends superlinearly on $v_0$, it follows that $\Delta^2 v_0$ is locally Lipschitz continuous.  Therefore using Schauder estimates \cite[Theorem 2.19]{GGS}, we obtain $v_0\in C^{4,\alpha}_{loc}(\Rn\setminus\{0\})$ for any $\alpha\in(0,1)$. This completes the proof.
\hfill$\square$

\subsection{Case 2: $\mu\in[-(N-2)^2, 0)$}
When $\mu<0$, $S_{\mu, 0}$ is not achieved (see theorem 1.3 in \cite{BM}). Therefore we give here an alternate proof to recover the full range $\mu\in[-(N-2)^2, 0)$.\\
We define,
\begin{equation}\label{smur}
\smu^{rad}=\inf_{\scriptstyle u\in \mathcal D^{2,2}(\Rn)
\atop\scriptstyle u=u(|x|)~,~u\ne 0}\frac{\displaystyle
\int_{\Rn}\left(|\De u|^{2}-\mu\frac{|u|^{2}}{|x|^{4}}\right)dx}
{\displaystyle\left(\int_{\Rn}|u|^{\starstar}dx\right)^{2/\starstar}}
\end{equation}
and for  $0<\ba<4$,  
\begin{equation}\label{smubr}
\smub^{rad}=\inf_{\scriptstyle u\in \mathcal D^{2,2}(\Rn)
\atop\scriptstyle u=u(|x|)~,~u\ne 0}\frac{\displaystyle
\int_{\Rn}\left(|\De u|^{2}-\mu\frac{|u|^{2}}{|x|^{4}}\right)dx}
{\displaystyle\left(\int_{\Rn}\frac{|u|^{q_{\ba}}}{|x|^{\beta}}dx\right)^{2/q_{\ba}}}
\end{equation}
It is being known that,  $\smu^{rad}$ and $\smub^{rad}$ are always achieved (see \cite[Theorem 1.1]{BM}). In that article the authors also proved  that the corresponding extremals are positive (see \cite[Theorem 1.2]{BM}). It also follows from \cite{BM} and \cite{Swan} that, when $\mu\geq 0$ the following equality holds: $\smub^{rad}=\smub$, where $0\leq\ba<4$, which is not necessarily be true if $\mu<0$. It follows from \cite[Theorem 5.2]{BM} that $\smub<\smub^{rad}$, when $\mu<<0$. For the second order elliptic operator, Catrina and Wang have proved in their celebrated paper \cite{CW} that, for any $\mu<0$, there exists $\tilde\ba_{\mu}\in (0,2)$ such  that for every $\tilde\ba\in(0,\tilde\ba_{\mu})$, no minimizer of 
\begin{equation}\label{tl-smub}
S_{\mu, \tilde\ba}:=\inf_{\scriptstyle u\in \mathcal D^{1,2}(\Rn)
\atop\scriptstyle u\ne 0}\frac{\displaystyle
\int_{\Rn}\left(|\nabla u|^{2}-\mu\frac{|u|^{2}}{|x|^{2}}\right)dx}
{\displaystyle\left(\int_{\Rn}\frac{|u|^{q_{\tilde\ba}}}{|x|^{\tilde\beta}}dx\right)^{2/q_{\tilde\ba}}}
\end{equation}
is radially symmetric, where $q_{\tilde\ba}=\frac{2(N-\tilde\ba)}{N-2}$. 

Since we have existence of nonnegative extremal for $\smub^{rad}$ when $0\leq\ba<4$,  we can carry out the proofs of Proposition \ref{p:ex1} and \ref{p:ex2} by restricting to radial functions and by replacing $\smu$ and $\smub$ in the definition \eqref{ex2} of $c^{*}$ by $\smu^{rad}$ and $\smub^{rad}$ respectively. This proves Theorem \ref{t:ex} in the case $\mu\in[-(N-2)^2, 0)$.

\vspace{3mm}
 
 \small
\noindent
{\bf Acknowledgment.} 
The author acknowledges the valuable suggestions of the referee, which helped in great extent to improve the manuscript. The author also acknowledges the support of Australian Research Council (ARC).

\label{References}

\end{document}